\newtheorem{theorem}{Theorem}[section]
\newtheorem{lemma}{Lemma}[section]
\newcommand{\dif}{\mathrm{d}}
\newcommand{\weakc}{\rightharpoonup}
\newcommand{\R}{\mathbb{R}}
\newcommand{\C}{\mathbb{C}}
\newcommand{\grad}{\nabla}
\newcommand{\ep}{\varepsilon}
\begin{document}
\lhead{}
\rhead{}

\begin{flushleft}
\Large 
\noindent{\bf \Large Analysis and computation of the transmission eigenvalues with a conductive boundary condition}
\end{flushleft}

\vspace{0.2in}

{\bf  \large Isaac Harris}\\
\indent {\small Department of Mathematics, Purdue University, West Lafayette, IN 47907, USA}\\
\indent {\small Email: \texttt{harri814@purdue.edu}}\\

{\bf  \large Andreas Kleefeld}\\
\indent {\small Forschungszentrum J\"{u}lich GmbH, J\"{u}lich Supercomputing Centre, Wilhelm-Johnen-} \\
\indent {\small Stra{\ss}e, 52425 J\"{u}lich, Germany. Email: \texttt{a.kleefeld@fz-juelich.de}}
\vspace{0.2in}

\begin{abstract}
\noindent We provide a new analytical and computational study of the transmission eigenvalues with a conductive boundary condition. These eigenvalues are derived from the scalar inverse scattering problem for an inhomogeneous material with a conductive boundary condition. The goal is to study how these eigenvalues depend on the material parameters in order to estimate the refractive index. The analytical questions we study are: deriving Faber-Krahn type lower bounds, the discreteness and limiting behavior of the transmission eigenvalues as the conductivity tends to infinity for a sign changing contrast. We also provide a numerical study of a new boundary integral equation for computing the eigenvalues. Lastly, using the limiting behavior we will numerically estimate the refractive index from the eigenvalues provided the conductivity is sufficiently large but unknown.
\end{abstract}

\vspace{0.1in}

\noindent {\bf Keywords}:  Transmission Eigenvalues $\cdot$ Conductive Boundary Condition $\cdot$ Inverse Spectral Problem $\cdot$ Boundary Integral Equations \\

\section{Introduction}
In this paper, we consider analytical and computational questions involving the interior transmission eigenvalues for a scalar scattering problem with a conductive boundary condition.  Transmission eigenvalue problems are derived by considering the inverse scattering problem of attempting to recover the shape of the scatterer from the far-field data (see for e.g. \cite{TE-book}). The scatterer is assumed to be illuminated by an incident plane wave, then the corresponding direct scattering problem associated with  the transmission eigenvalues is given by: find the total field $u \in H^1(D)$ and scattered field $u^s \in H^1_{loc}(\R^d \setminus \overline{D})$ (where $d=2,3$) such that  
\begin{align}
\Delta u^s +k^2 u^s=0  \quad \textrm{ in }  \R^d \setminus \overline{D}  \quad  \text{and} \quad  \Delta u +k^2 nu=0  \quad &\textrm{ in } \,  {D}  \label{direct1}\\
 (u^s+u^i )^+ - u^-=0  \quad  \text{and} \quad \partial_\nu (u^s+u^i )^+ + \eta (u^s+u^i )^+= {\partial_{\nu} u^-} \quad &\textrm{ on } \,  \partial D \label{direct3}\\
 \lim\limits_{r \rightarrow \infty} r^{{(d-1)}/{2}} \left( {\partial_r u^s} -\text{i} k u^s \right)=0 \label{SRC}&\,.
\end{align}
The superscripts $+$ and $-$ indicate the trace on the boundary taken from the exterior or interior of the domain $D$, respectively. The parameter $\eta$ denotes the conductivity. The total field is given by $u=u^s+u^i$ where the incident field is given by $u^i=\text{e}^{\text{i} k x \cdot \hat{y} }$. Here the incident direction $\hat{y}$ is given by a point on the unit sphere/circle denoted $\mathbb{S}^{d-1}$.   In this case one has that the corresponding far-field operator used in the inversion algorithm is injective with a dense range provided that the corresponding wave number is not a transmission eigenvalue \cite{fmconductbc}. One can consider these wave numbers as corresponding to frequencies where there is a Herglotz wave function that does not produce a scattered wave if taken as the illuminating incident wave, where the Herglotz wave function can be seen as a superposition of incident plane waves. Transmission eigenvalue problems are non-linear and non-self-adjoint eigenvalue problems. This makes their investigation interesting mathematically but also challenging. This has lead to new and interesting methods for the theoretical as well as computational study of these problems. 

In general, these eigenvalues can be determined from the far-field data via the linear sampling method (LSM) and the inside-out duality method (IODM). See for the determination of the classical transmission eigenvalues in \cite{TE-book, cchlsm, mypaper1, armin, isp-n2} and \cite{te-cbc,te-cbc2} for numerical examples for the transmission eigenvalues with a conductive boundary condition. This implies that one does not need to have {\it a prior knowledge} of the coefficients to determine the eigenvalues.  Since in \cite{te-cbc} the transmission eigenvalues have been shown to depend on the material parameters monotonically one can study the {\it inverse spectral problem} of determining/estimating the material parameters from the eigenvalues. The most well known inverse spectral problems is the ``Can you hear the shape of a drum'' problem proposed in the 1966 paper \cite{drumshape}. This problem amounts to the question: Do the Dirichlet eigenvalues of the Laplacian uniquely determine the shape of the domain? It is well known that one can not hear the shape but some geometric properties can be uniquely determined. Similarly the question in the preceding section can be seen as ``Can you hear the material parameters?'' since we wish to estimate the material properties from the far-field data (see for e.g. \cite{mypaper1,isp-n2}). In many medical and engineering applications one wants to detect changes in the material parameters of a known object. These problems can fall under the category of non-destructive testing where one wants to infer the integrity of the interior structure using acoustic or electromagnetic waves.

We are motivated by the previous work in \cite{te-cbc} where this eigenvalue problem was first analyzed as well as \cite{te-cbc2} where the investigation was continued with the study of the eigenvalues as the conductivity tends to zero. We also refer to \cite{electro-cbc,two-eig-cbc} for the study of the electromagnetic transmission eigenvalues with a conductive boundary condition. There are some important theoretical and computational questions concerning the transmission eigenvalues with a conductive boundary condition that are studied in this paper. Here we will consider three theoretical questions for this problem, as is done in \cite{DMS-te} for the classical transmission eigenvalues. First we derive Faber-Krahn type inequalities for the transmission eigenvalues with a conductive boundary (see for e.g. \cite{te-fk,te-void}). Then, we prove the discreteness as well as establish the limiting behavior as the conductivity tends to infinity for a sign changing contrast. The limiting behavior as the conductivity tends to infinity for the electromagnetic transmission eigenvalues with a conductive boundary condition was studied in \cite{two-eig-cbc}. Using the limiting behavior we will consider the inverse spectral problem of estimating the refractive index provided that the conductivity is larger but unknown.  Recently, the method of fundamental solutions for computing the classical transmission eigenvalues was studied and implemented in \cite{mfs-te} and in \cite{kleefeldITP} a system of boundary integral equations was established. We will also provide a new boundary integral equation for computing the transmission eigenvalues using the idea given in \cite{cakonikress}. Precisely, the new boundary integral equation is derived from making the ansatz that the eigenfunction can be written in terms of a single layer potential. 

The rest of the paper is ordered as follows. In Section \ref{te-section}, we begin by defining the transmission eigenvalue problem as well as study the analytical questions with a conductive boundary condition. Then in Section \ref{bie}, we will derive a new boundary integral equation for computing the eigenvalues and give details on its implementation. In Section \ref{numerics-section}, we provide verification of the implementation. Additionally, we will numerically validate the theoretical results for the limiting behavior as the conductivity tends to infinity. Lastly, we will numerically investigate the inverse spectral problem of estimating the refractive index from the eigenvalues provided the conductivity is sufficiently large but unknown.

\section{Analysis of the Transmission Eigenvalues }\label{te-section}
In this section, we analytically study the transmission eigenvalues with a conductive boundary condition. To do so, we will begin by rigorously defining the eigenvalue problem in the appropriate function spaces. In our analysis we will use a variational method to prove the theoretical results. Our analysis will also use the concept of $T$ coercivity for a sesquilinear form. This has been used in \cite{te-coersive1,te-coersive2} for studying other transmission eigenvalue problems. In particular, our analysis we will use the $T$ coercivity developed in \cite{T-coercive}. Therefore, we let $D \subset \R^d$ be a bounded simply connected open set with $\nu$ being the unit outward normal to the boundary. Here we will assume that the boundary $\partial D$ is either of class $\mathscr{C}^2$ or is polygonal with no reentrant corners. This assumption on the boundary will allow us to appeal to elliptic regularity estimates in our analysis. Furthermore, we assume that the refractive index $n(x)$ is a scalar bounded real-valued function defined in $D$ and the conductivity parameter $\eta (x)$ is a scalar bounded real-valued function defined on the boundary $\partial D$.

The transmission eigenvalue problem with conductive boundary condition is given by: find $k \in \C$ and non-trivial $(w,v) \in L^2(D) \times L^2(D)$ such that  
\begin{align}
\Delta w +k^2 n w=0 \quad \text{and} \quad \Delta v + k^2 v=0  \quad &\textrm{ in } \,  D \label{teprob1} \\
 w-v=0  \quad  \text{and} \quad  {\partial_{\nu} w}-{\partial_\nu v}= \eta v \quad &\textrm{ on } \partial D \label{teprob2} 
\end{align} 
where $w-v \in  X(D)$. The variational space for difference of the eigenfunctions is 
$$X(D)=H^2(D) \cap H^1_0(D) \quad \text{ such that } \quad \| \cdot \|_{X(D)} = \|\Delta \cdot \|_{L^2(D)}\,.$$
By our assumptions on the boundary $\partial D$ we have that the well-posedness estimate for the Poisson problem and the $H^2$ elliptic regularity estimate (see for e.g. \cite{evans}) implies that the $L^2$ norm of the Laplacian is equivalent to the $H^2$ in the associated Hilbert space $X(D)$. 
Here the Sobolev spaces are given by   
$$H^2(D) =\big\{ \varphi \in L^2(D) \, : \, \partial_{x_i} \varphi \quad  \text{and} \quad \partial_{x_i x_j} \varphi \in L^2(D) \, \text{ for } \, i,j =1, \ldots, d \big\} $$
and 
$$H^1_0(D) =\big\{ \varphi \in L^2(D) \, : \, \partial_{x_i} \varphi  \in L^2(D) \,\,  \text{ for } \,\,  i =1, \ldots , d \,\, \text{ with } \,\, \varphi|_{\partial D}=0 \big\}\,.$$

Notice that by subtracting the equations in \eqref{teprob1} and the boundary conditions in \eqref{teprob2} we have that the difference $u=w-v$ satisfies 
$$ \Delta u +k^2 n u=-k^2(n-1)v  \quad \textrm{ in } \, D\,, \quad u=0 \, \, \textrm{ and } \, \, {\partial_\nu}u =  \eta v \, \textrm{ on } \, \partial  D\,.$$
{\color{black} For analytical considerations we will assume that $|n-1|^{-1} \in L^{\infty}( D)$.} Therefore, we have that the transmission eigenvalue problem with conductive boundary condition can be written as: 
find the values $k \in \C$ such that there is a nontrivial solution $u \in X(D)$ satisfying
\begin{align}
(\Delta+k^2)\frac{1}{n-1}(\Delta u +k^2 n u)=0  \quad &\textrm{ in } \,  D\,, \label{teprobu3} \\
-\frac{k^2}{\eta} \frac{\partial u}{\partial \nu}=\frac{1}{n-1}(\Delta u +k^2 n u) \quad &\textrm{ on } \partial D\,.  \label{teprobu4} 
\end{align}
The boundary condition \eqref{teprobu4} is understood in the sense of the trace theorem. In \cite{te-cbc} it has been shown that \eqref{teprob1}--\eqref{teprob2} and \eqref{teprobu3}--\eqref{teprobu4} are equivalent. We also have that there are infinity many real transmission eigenvalues provided $\eta$ is strictly positive on $\partial D$ and the contrast $n-1$ is either uniformly positive or negative in $D$.
Notice that $v$ and $w$ are related to the eigenfunction $u$ by
$$v = -\frac{1}{k^2(n-1)}(\Delta u + k^2 n u) \quad \text{ and } \quad w = -\frac{1}{k^2(n-1)}(\Delta u + k^2 u)\,. $$
In order to study this eigenvalue problem \eqref{teprobu3}--\eqref{teprobu4} we derive an equivalent variational formulation. Taking a test function $\varphi \in X(D)$, multiplying it by the conjugate of \eqref{teprobu3}, and using Green's Second Theorem we have that 
\begin{align}        
0 =  \int\limits_D \frac{1}{ n-1 }(\Delta u +k^2 nu) (\Delta \overline{\varphi} +k^2 \overline{\varphi}) \, \text{d}x +  \int\limits_{\partial D} \frac{k^2}{\eta} \frac{\partial u}{\partial \nu} \frac{\partial \overline{\varphi} }{\partial \nu} \, \text{d}s \label{TE-varform} 
\end{align}
(see \cite{te-cbc} for details) where the boundary integral is obtained by the conductive boundary condition \eqref{teprobu4}. In order for the variational form to be well-defined in $X(D)$ we will assume that 
$$|n-1|^{-1} \in L^{\infty}( D) \quad \text{ and } \quad 0<\eta_{\text{min}} \leq \eta \leq \eta_{\text{max}} \quad \text{ for a.e.}  \,\,\,\,x \in  D$$
with $\eta_{\text{min}}$ and $\eta_{\text{max}}$ are constants.

\subsection{Faber-Krahn type inequalities}
We now show that there is a lower bound on the real transmission eigenvalues provided $\eta$ is strictly positive on $\partial D$ and the contrast $n-1$ is either uniformly positive or negative in $D$. We will derive these lower bounds by studying the variational formulation \eqref{TE-varform}. The lower bounds will depend on if the contrast is positive or negative in $D$. Here we use similar techniques as in \cite{te-void} where Faber-Krahn inequalities for the classical transmission eigenvalues with a cavity have been derived. 

To begin, we first need a Poincar\'e type estimate in $X(D)$ in terms of an auxiliary eigenvalue problem. Therefore, we let $\lambda \in \R_{+}$ and the non-trivial $\psi \in X(D)$ be the simply supported plate buckling eigenpair with 
\begin{align}        
\Delta^2 \psi = -\lambda \Delta \psi \,\, \,\, \text{in} \,\, \,\,D \quad \text{ and } \quad \Delta \psi = 0  \,\,\,\, \text{{\color{black} on}} \,\,\,\, \partial D\,. \label{aux-eig}
\end{align}
It is clear that there are infinitely many eigenvalues $\lambda_j$ and eigenfunctions $\psi_j$ satisfying the above problem. The eigenvalues also satisfy a Courant-Fischer min-max principle and in particular the smallest eigenvalue satisfies
$$\lambda_1 = \min\limits_{\varphi \in X(D) \setminus \{ 0\} }  \frac{ \| \Delta \varphi \|^2_{L^2(D)} }{ \| \grad \varphi \|^2_{L^2(D)}}\,.$$
Therefore, we can conclude that for all $\varphi \in X(D)$ we have the Poincar\'e type estimate
$$ \lambda_1 \| \grad \varphi \|^2_{L^2(D)} \leq  \| \Delta \varphi \|^2_{L^2(D)}\,.$$
From this we can now derive the Faber-Krahn type inequalities for the transmission eigenvalues with a conductive boundary condition. For this we will now assume that there are two constants  such that 
$$ n_{\text{min}} \leq n \leq n_{\text{max}} \quad \text{ for a.e.}  \,\,\,\,x \in  D\,.$$
\begin{theorem}\label{fk-inequ}
 Let $k$ be a real transmission eigenvalue satisfying \eqref{teprobu3}--\eqref{teprobu4} then we have the inequalities  
$$ k^2 \geq \frac{\lambda_1}{n_{max}} \,\, \textrm{ when } \,\, {\color{black}n_{\text{min}}>1} \quad \text{ and} \quad  k^2 \geq \frac{\lambda_1 {\color{black} \eta_{\text{min}}} }{{\color{black} \eta_{\text{min}}} +C_T \lambda_1} \,\, \textrm{ when } \,\, {\color{black}n_{\text{max}}<1}\,. $$ 
Here $\lambda_1$ is the first simply supported plate buckling eigenvalue for $D$ and $C_T$ is the trace theorem constant such that $\| \partial_{\nu} \varphi  \|^2_{L^2(\partial D)} \leq C_T \| \Delta \varphi \|^2_{L^2(D)}$ for all $\varphi \in X(D)$.
\end{theorem}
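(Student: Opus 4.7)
The strategy is to plug $\varphi = u$ into the variational formulation \eqref{TE-varform}, integrate by parts using $u \in H^1_0(D)$, and employ the splitting $\Delta u + k^2 n u = (\Delta u + k^2 u) + k^2(n-1)u$ to derive the two equivalent identities
\begin{equation*}
k^2\|\nabla u\|^2_{L^2(D)} = \int_D \frac{|\Delta u + k^2 u|^2}{n-1}\, \text{d}x + k^4\|u\|^2_{L^2(D)} + k^2\int_{\partial D}\frac{|\partial_\nu u|^2}{\eta}\, \text{d}s
\end{equation*}
and, via the mirror splitting $\Delta\bar u + k^2\bar u = (\Delta\bar u + k^2 n\bar u) - k^2(n-1)\bar u$,
\begin{equation*}
k^4\int_D n|u|^2\, \text{d}x = \int_D \frac{|\Delta u + k^2 n u|^2}{n-1}\, \text{d}x + k^2\|\nabla u\|^2_{L^2(D)} + k^2\int_{\partial D}\frac{|\partial_\nu u|^2}{\eta}\, \text{d}s.
\end{equation*}
Each Faber--Krahn bound will then fall out by exploiting the sign of $n-1$ in the corresponding volume integrand.

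For the first bound ($n_{\min} > 1$), every term on the right of the second identity is non-negative, so dropping the first integral and the boundary term yields $k^2 n_{\max}\|u\|^2 \geq \|\nabla u\|^2$. The key observation is that the simply supported plate buckling problem \eqref{aux-eig} is equivalent, via the substitution $w = -\Delta\psi$, to the Dirichlet eigenvalue problem for $-\Delta$ on $D$; consequently $\lambda_1$ coincides with the first Dirichlet eigenvalue and the Poincar\'e inequality $\lambda_1\|u\|^2 \leq \|\nabla u\|^2$ holds on $H^1_0(D) \supset X(D)$. Combining these two estimates delivers $k^2 n_{\max} \geq \lambda_1$.

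For the second bound ($n_{\max} < 1$), the first integral in the first identity is non-positive, and the identity reduces to
\begin{equation*}
\|\nabla u\|^2 \leq k^2\|u\|^2 + \int_{\partial D}\frac{|\partial_\nu u|^2}{\eta}\, \text{d}s.
\end{equation*}
I then estimate the right-hand side with three tools: the Poincar\'e inequality (to bound $k^2\|u\|^2 \leq (k^2/\lambda_1)\|\nabla u\|^2$), the trace estimate (to bound $\int_{\partial D}|\partial_\nu u|^2/\eta \leq (C_T/\eta_{\min})\|\Delta u\|^2$), and the buckling inequality $\lambda_1\|\nabla u\|^2 \leq \|\Delta u\|^2$ (to convert $\|\Delta u\|^2$ back into a multiple of $\|\nabla u\|^2$). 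After collecting $\|\nabla u\|^2$ factors, the inequality rearranges to $1 \leq k^2/\lambda_1 + C_T k^2/\eta_{\min}$, which is equivalent to $k^2 \geq \lambda_1\eta_{\min}/(\eta_{\min} + C_T\lambda_1)$.

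The main obstacle is Case 2. With $n-1 < 0$ the sign-definiteness of the volume integral is lost, so the full lower bound on $k^2$ must be extracted from the boundary term. Orchestrating the Poincar\'e, trace, and buckling inequalities in just the right order so that all norms cancel and leave a clean dependence on $\lambda_1$, $C_T$, and $\eta_{\min}$ is the delicate step; in particular, the buckling inequality is used twice---implicitly through its identification with the first Dirichlet eigenvalue and explicitly to recast the trace term back in terms of $\|\nabla u\|^2$.
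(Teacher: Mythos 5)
Your Case 1 argument is correct but takes a genuinely different route from the paper: instead of expanding $|\Delta u + k^2 u|^2$ and running a Young's-inequality argument with a parameter $\ep\in(\alpha,\alpha+1)$ against the buckling estimate $\lambda_1\|\nabla u\|^2\le\|\Delta u\|^2$, you use the mirror identity (with $|\Delta u+k^2 n u|^2$ in the volume term) together with the identification of $\lambda_1$ with the first Dirichlet eigenvalue of $-\Delta$ and the ordinary Poincar\'e inequality $\lambda_1\|u\|^2_{L^2(D)}\le\|\nabla u\|^2_{L^2(D)}$. That is a clean and legitimate alternative under the paper's boundary regularity assumption (which is what makes the buckling and Dirichlet spectra coincide); the paper's version is slightly more robust in that it only uses the min--max characterization of $\lambda_1$ on $X(D)$.

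Case 2, however, has a genuine gap, precisely at the step you yourself flag as delicate. After dropping the non-positive volume term and dividing by $k^2$ you arrive at
$$\|\nabla u\|^2_{L^2(D)} \le \frac{k^2}{\lambda_1}\|\nabla u\|^2_{L^2(D)} + \frac{C_T}{\eta_{\text{min}}}\|\Delta u\|^2_{L^2(D)},$$
and you propose to close the argument by ``converting $\|\Delta u\|^2$ back into a multiple of $\|\nabla u\|^2$'' via $\lambda_1\|\nabla u\|^2\le\|\Delta u\|^2$. That inequality points the wrong way: it bounds $\|\Delta u\|^2$ from \emph{below}, and the ratio $\|\Delta u\|^2/\|\nabla u\|^2$ is unbounded on $X(D)$, so once $\|\Delta u\|^2$ sits on the majorizing side the displayed inequality carries no information. (Even a formal substitution would yield $1\le k^2/\lambda_1 + C_T\lambda_1/\eta_{\text{min}}$ rather than the claimed $1\le k^2/\lambda_1 + C_T k^2/\eta_{\text{min}}$; the $k^2$ on the trace term cannot reappear after you have divided it out.) The missing idea is a different splitting of the volume integrand: writing $\Delta u + k^2 n u = n(\Delta u + k^2 u) - (n-1)\Delta u$ turns \eqref{TE-varform} with $\varphi=u$ into
$$0=\int\limits_D \frac{n}{1-n}\,|\Delta u+k^2u|^2 + |\Delta u|^2\,\dif x - k^2\|\nabla u\|^2_{L^2(D)} - k^2\int\limits_{\partial D}\frac{1}{\eta}\,|\partial_\nu u|^2\,\dif s,$$
in which the dominant positive term is $\|\Delta u\|^2$ (so the trace estimate can be absorbed into it) and the boundary term keeps its factor $k^2$ (which is where the $k^2$ multiplying $C_T/\eta_{\text{min}}$ comes from). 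Dropping the non-negative first integral and applying the buckling and trace estimates then gives $0\ge\left(1-k^2/\lambda_1-C_Tk^2/\eta_{\text{min}}\right)\|\Delta u\|^2_{L^2(D)}$ and hence the stated bound.
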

\begin{proof} 
To prove the claim we first start with the case when ${\color{black}n_{\text{min}}>1}$. Therefore, it can be shown (see also \cite{te-cbc}) by taking $\varphi =u$ (the corresponding eigenfunction) that \eqref{TE-varform} can be written as 
\begin{align*}        
0&=\int\limits_D  \frac{1}{ n-1} | \Delta u +k^2 u |^2 +k^4 |u|^2 -k^2|\grad u |^2 \, \dif x + k^2 \int\limits_{\partial D} \frac{1}{\eta} |\partial_{\nu} u|^2 \, \dif s \\
  &\geq \int\limits_D  \alpha | \Delta u +k^2 u |^2 \, \dif x+  \int\limits_Dk^4 |u|^2 -k^2|\grad u |^2 \, \dif x + k^2 \int\limits_{\partial D} \frac{1}{\eta} |\partial_{\nu} u|^2 \, \dif s
\end{align*}        
where we let $\alpha=(n_{max}-1)^{-1}>0$. We now expand to obtain
\begin{align*}        
0 &\geq \alpha \| \Delta u \|^2_{L^2(D)} -2\alpha k^2 \| \Delta u \|_{L^2(D)}\| u \|_{L^2(D)} +k^4(\alpha+1) \|  u \|^2_{L^2(D)} -k^2 \| \grad u \|^2_{L^2(D)}\\
   &\geq \left( \alpha -\frac{\alpha^2}{\ep} \right)\| \Delta u \|^2_{L^2(D)} +k^4(\alpha+1-\ep)\|  u \|^2_{L^2(D)}-k^2 \| \grad u \|^2_{L^2(D)}
\end{align*}
by appealing to Cauchy-Schwarz inequality and Young's inequality for any positive $\ep$. Provided that $\ep \in (\alpha , \alpha+1)$ we can conclude that 
\begin{align*}        
0 &\geq \left( \alpha -\frac{\alpha^2}{\ep} \right)\| \Delta u \|^2_{L^2(D)} -k^2 \| \grad u \|^2_{L^2(D)}\\
   &\geq \left( \alpha -\frac{\alpha^2}{\ep}  - \frac{k^2}{\lambda_1}\right)\| \Delta u \|^2_{L^2(D)}
\end{align*}
where we have used the Poincar\'e estimate in $X(D)$. This implies that 
$$ \alpha \lambda_1 \left(1- \frac{\alpha}{\ep} \right) \leq k^2 \quad \text{ for all } \quad \ep \in (\alpha , \alpha+1)$$
provided that  $k$ is a transmission eigenvalue. Letting $\ep \to \alpha+1$ and the fact that $\alpha=(n_{\text{max}}-1)^{-1}>0$ we obtain $k^2 \geq \frac{\lambda_1}{n_{\text{max}}}$ which proves the result for this case. 

Now, for the case when ${\color{black} n_{\text{max}}<1}$ we take $\varphi =u$  and rewriting \eqref{TE-varform} we can obtain 
\begin{align*}        
0&= \int\limits_D  \frac{n}{ 1-n}| \Delta u +k^2 u |^2 +|\Delta u|^2  \, \dif x -k^2 \int\limits_D |\grad u|^2  \, \dif x -k^2 \int\limits_{\partial D} \frac{1}{\eta} |\partial_{\nu} u|^2  \, \dif s\\
  &\geq \| \Delta u \|^2_{L^2(D)} -k^2 \| \grad u \|^2_{L^2(D)} -  \frac{k^2}{ {\color{black} \eta_{\text{min}}} } \| \partial_{\nu} u  \|^2_{L^2(D)}\,. 
\end{align*}        
We now apply the trace theorem as well as the Poincar\'e type estimate in $X(D)$ 
$$ 0\geq  \left(1 - \frac{k^2}{\lambda_1} - \frac{C_T k^2}{{\color{black} \eta_{\text{min}}}} \right) \| \Delta u \|^2_{L^2(D)}\,.$$
Therefore, we again can conclude $k$ to be a transmission eigenvalue. Then a simple calculation gives that $k^2 \geq  \frac{\lambda_1 {\color{black} \eta_{\text{min}}} }{{\color{black} \eta_{\text{min}}} +C_T \lambda_1}$ which proves the claim. 
\end{proof}

Notice that Theorem \ref{fk-inequ} holds for any domain $D$ with piecewise smooth boundary $\partial D$. Here we have assumed that $\partial D$ is either of class $\mathscr{C}^2$ or is polygonal with no reentrant corners. This gives that we have the elliptic $H^2$ regularity estimate. Using this one can show that the simply supported plate buckling eigenvalues coincide with the Dirichlet eigenvalues for the Laplacian. Theorem \ref{fk-inequ} then gives that the transmission eigenvalues with a conductive boundary condition satisfy the same Faber-Krahn inequality given in \cite{te-fk} when ${\color{black} n_{\text{min}} }>1$ as the classical transmission eigenvalues. For the case when ${\color{black} n_{\text{max}}<1}$ we note that from \cite{te-cbc2} that as ${\color{black}\eta \to 0}$ the $k_\eta \to k_0$ where $k_0$ is the classical transmission eigenvalue with $\eta = 0$. Therefore, in the limit as ${\color{black} \eta \to 0}$ the Faber-Krahn inequality in Theorem \ref{fk-inequ} becomes the known inequality in \cite{te-fk} for the classical transmission eigenvalues. 

\subsection{Discreteness for sign changing contrast}
For this section, we will study the discreteness of the transmission eigenvalues when the contrast $|n-1|^{-1} \in L^{\infty}( D)$. Recall, that this is all that is needed to make sense for the variational formulation \eqref{TE-varform}. In \cite{te-cbc} the discreteness of the set of transmission eigenvalues was established provided that the contrast is either uniformly positive or negative in $D$. This is a strong requirement assumed on the contrast, especially if one is interested in the problem of estimating the refractive index from the eigenvalues for non-destructive testing. We will employ the concept of $T$ coercivity that was studied in \cite{T-coercive} for the biharmonic operator with sign changing coefficients.

To begin, we will define what it means for a sesquilinear form to be $T$ coercive. Let $a(\cdot , \cdot )$ be a given bounded sesquilinear form acting on a Hilbert space $X$, then we say that $a(\cdot , \cdot )$ is $T$ coercive provided that there is an isomorphism $T : X \mapsto X$ such that $a(\cdot , T \cdot )$ is a coercive sesquilinear form. Notice that by the inf-sup condition we have that if $a(\cdot , \cdot )$ is $T$ coercive, then $a(\cdot , \cdot )$ can be represented by a continuously invertible operator $A: X \mapsto X$ such that 
$$ a(u,\varphi)=(Au,\varphi)_{X} \quad \text{ for all } \quad u,\varphi \in X\,.$$
This will be used to split the variational formulation \eqref{TE-varform} into an invertible and compact part. Then we can appeal to the Analytic Fredholm Theorem (see for e.g. \cite{TE-book}) to conclude discreteness.

Now, we have that the variational formulation \eqref{TE-varform} can be written as 
\begin{align}
a(u,\varphi)+b_k (u,\varphi) = 0 \quad \text{ for all } \quad \varphi \in X(D)\,. \label{varform}
\end{align}
The bounded sesquilinear forms $a(\cdot \, , \cdot)$ and $b_k(\cdot \, , \cdot)$ on $X(D) \times X(D)$ are given by 
\begin{align}
a(u,\varphi) &=  \int\limits_D  \frac{1}{ n-1}  \Delta u \Delta \overline{\varphi}  \, \dif x   \label{a-form}
\end{align}
and 
\begin{align}
b_k(u,\varphi) &=  k^2\int\limits_D \frac{1}{n-1} ( \overline{\varphi}  \, \Delta u + u \, \Delta \overline{\varphi}) \, \dif x - k^2 \int\limits_D \grad u \cdot \grad \overline{\varphi} \, \dif x \nonumber \\
 &\hspace{2in} + k^2 \int\limits_{\partial D} \frac{1}{\eta} {\partial_{\nu} u}{\partial_{\nu} \overline{\varphi} } \, \dif s + k^4  \int\limits_D  u \overline{\varphi} \, \dif x\,.   \label{b-form}
\end{align}
The boundedness is clear from the fact that it is assumed that $|n-1|^{-1} \in L^{\infty}( D)$ and the conductivity satisfies $0<\eta_{\text{min}} \leq \eta$. Following the arguments in \cite[Section 3]{te-cbc}, we can conclude that $b_k(\cdot \, , \cdot)$ in \eqref{b-form} can be represented by a compact operator $B_k :X(D) \mapsto X(D)$. We also obtain since 
$$ b_k (u,\varphi)=(B_k u,\varphi)_{X(D)} \quad \text{ for all } \quad u,\varphi \in X(D)$$
that $B_k$ depends analytically on $k \in \C$. Similarly, we can have that there is a bounded operator $A:X(D) \mapsto X(D)$ that represents the sesquilinear form $a(\cdot \, , \cdot)$ in \eqref{a-form}. Now, motivated by \cite{T-coercive} we define the operator $T:X(D) \mapsto X(D)$ such that 
\begin{align}
\Delta T\varphi = (n-1) \Delta \varphi \quad \text{ for all} \quad  \varphi \in X(D)\,. \label{T-def}
\end{align}
We now wish to show that the operator $T$ defined by \eqref{T-def} is an isomorphism on $X(D)$ whenever $\partial D$ is either of class $\mathscr{C}^2$ or is polygonal with no reentrant corners. To do so, we first notice that by the well-posedness of the Poisson problem we have that for every $\varphi \in X(D)$ there is a $T\varphi \in H_0^1(D)$ satisfying \eqref{T-def}. Now, by elliptic regularity \cite{evans} we conclude that $T\varphi \in X(D)$. Moreover, by \eqref{T-def} along with the fact that $|n-1|^{-1} \in L^{\infty}( D)$ we easily obtain that 
$$ C \|\Delta \varphi\|_{L^2(D)}^2 \leq  \big| (  \Delta T\varphi ,  \Delta \varphi )_{L^2(D)} \big|$$
where the constant $C$ is independent of $\varphi$ but depend only on  the contrast. Since $\| \cdot \|_{X(D)} = \|\Delta \cdot \|_{L^2(D)}$ we can conclude that $T$ is coercive on $X(D)$. This implies that  $T$ is  an isomorphism on $X(D)$ by the Lax-Milgram Lemma. With this we are ready to prove the discreteness of the transmission eigenvalues.

\begin{theorem}\label{discrete}
Assume that $|n-1|^{-1} \in L^{\infty}( D)$ and $0<\eta_{\text{min}} \leq \eta$. Then the set of transmission eigenvalues is at most discrete. 
\end{theorem}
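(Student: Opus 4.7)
The strategy is to combine the $T$-coercivity of the principal part with the Analytic Fredholm Theorem applied to the family $k \mapsto A + B_k$. My first step is to verify that $a(\cdot , T\cdot)$, with $T$ the operator defined in \eqref{T-def}, is coercive on $X(D)$. Since $n$ is real valued and $\Delta T u = (n-1) \Delta u$, a direct calculation gives
\[
a(u , Tu) = \int_D \frac{\Delta u \, \overline{\Delta Tu}}{n-1} \, \dif x = \int_D |\Delta u |^2 \, \dif x = \| u \|_{X(D)}^2,
\]
so $a(\cdot , T \cdot)$ is coercive with constant $1$. Because $T$ has already been shown to be an isomorphism on $X(D)$, a standard inf-sup argument (or equivalently the Lax-Milgram lemma applied to $a(\cdot , T \cdot)$) yields that the bounded operator $A: X(D) \to X(D)$ representing the sesquilinear form $a(\cdot , \cdot )$ is continuously invertible.

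My second step is to rewrite the variational problem \eqref{varform} as the operator equation $(I + A^{-1} B_k) u = 0$ in $X(D)$. Since $B_k$ is compact and the assignment $k \mapsto B_k$ is analytic into the space of bounded operators on $X(D)$, the family $k \mapsto A^{-1} B_k$ is an analytic family of compact operators. The Analytic Fredholm Theorem then provides the dichotomy: either $I + A^{-1} B_k$ fails to be invertible for every $k \in \C$, or the set of $k \in \C$ for which it fails to be invertible is at most discrete with no finite accumulation points. The latter alternative gives exactly the discreteness of the transmission eigenvalues.

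The main obstacle, and the part requiring the most care, is excluding the first alternative by exhibiting at least one regular value $k_0 \in \C$ of the pencil. A natural choice is $k_0 = \mathrm{i} \tau$ with $\tau > 0$ sufficiently large. To handle this, I would again test the full form $a(u, \varphi) + b_{\mathrm{i}\tau}(u, \varphi)$ against $\varphi = Tu$: after noting that $u, Tu \in H^1_0(D)$, the boundary terms arising from integration by parts on $\int_D \grad u \cdot \grad \overline{Tu} \, \dif x$ and $\int_D u \, \overline{\Delta T u} \, \dif x$ vanish, and the principal contribution $\|\Delta u\|_{L^2(D)}^2$ from $a(u, Tu)$ is supplemented by a positive $\tau^2 \|\grad u\|_{L^2(D)}^2$ term. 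The remaining cross terms and the boundary term $\tau^2 \int_{\partial D} \eta^{-1} \partial_\nu u \, \overline{\partial_\nu Tu} \, \dif s$ can be absorbed using Cauchy-Schwarz, Young's inequality, the Poincar\'e-type estimate from Section~\ref{te-section}, the trace theorem, and the bounds $\eta \geq \eta_{\text{min}} > 0$ and $|n-1|^{-1} \in L^{\infty}(D)$. Choosing $\tau$ large enough to absorb all negative contributions yields coercivity of $A + B_{\mathrm{i}\tau}$ on $X(D)$, hence invertibility at $k_0$, ruling out the first alternative and concluding the proof.
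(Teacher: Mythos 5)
Your first two steps coincide with the paper's argument: the identity $a(u,Tu)=\|\Delta u\|_{L^2(D)}^2$ gives $T$-coercivity, hence invertibility of $A$, and the Analytic Fredholm Theorem applied to the analytic family of compact operators $k\mapsto A^{-1}B_k$ reduces everything to exhibiting a single regular value of the pencil. Where you diverge is in that last step, and this is where your proposal has a genuine gap. The paper's choice is $k_0=0$: every term of $b_k$ in \eqref{b-form} carries a factor $k^2$ or $k^4$, so $B_0$ is the zero operator and $A+B_0=A$ is invertible with no further work. Your choice $k_0=\mathrm{i}\tau$ with $\tau$ large is not only unnecessary here, the coercivity estimate you sketch does not go through for a sign-changing contrast.

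Concretely, testing against $\varphi=Tu$ (not $u$), the gradient term in $b_{\mathrm{i}\tau}(u,Tu)$ is $\tau^2\int_D\grad u\cdot\grad\overline{Tu}\,\dif x$, which after integration by parts equals $-\tau^2\int_D (n-1)\,u\,\overline{\Delta u}\,\dif x$; this is a cross term with no sign, not the positive $\tau^2\|\grad u\|_{L^2(D)}^2$ contribution you claim. Worse, the zeroth-order term becomes $\tau^4\int_D u\,\overline{Tu}\,\dif x$, which also has no definite sign when $n-1$ changes sign, and it scales like $\tau^4$ while the only manifestly positive quantity available, $\|\Delta u\|_{L^2(D)}^2$ from $a(u,Tu)$, is independent of $\tau$. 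No application of Cauchy--Schwarz, Young's inequality, the Poincar\'e estimate, or the trace theorem can absorb a term of order $\tau^4$ into a term of order $1$, so taking $\tau$ large makes the estimate worse, not better. (Large imaginary $k$ arguments of this type work for the classical problem with one-signed contrast because the terms reorganize into a sum of squares; that structure is exactly what is lost in the sign-changing setting, which is why the paper resorts to $T$-coercivity in the first place.) The repair is immediate: replace your third step by the observation that $b_0\equiv 0$, so $k=0$ is not a transmission eigenvalue, and the Analytic Fredholm alternative yields discreteness.
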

\begin{proof} 
To prove the claim we will appeal to the Analytic Fredholm Theorem. Therefore, by the definition of the operator $T$ given in \eqref{T-def} we notice that 
$$a(u,Tu) = \int\limits_D  \frac{1}{ n-1}  \Delta u \Delta \overline{T u}  \, \dif x =  \|\Delta u\|_{L^2(D)}^2 \quad \text{ for all } \quad u \in X(D)$$ 
which implies that $a(\cdot , \cdot )$ is $T$ coercive on $X(D)$ and therefore $a(\cdot \, , \cdot)$ can be represented by an invertible operator. From \eqref{varform} we have that $k \in \C$ is a transmission eigenvalue if and only if $A+B_k$ is not injective.  Since $A$ is invertible and $B_k$ is compact we have that $A+B_k$ is Fredholm
with index zero. By definition of $B_k$ from the sesquilinear form given in \eqref{b-form} we have that $B_0$ is the zero operator. We then conclude that $k=0$ is not a transmission eigenvalue since $A+B_0$ is injective and by the Analytic Fredholm Theorem there is at most a discrete set of values in $\C$ where $A+B_k$  fails to be injective, proving the claim. 
\end{proof}

Notice that Theorem \ref{discrete} only requires that $|n-1|^{-1} \in L^{\infty}( D)$. This implies that the contrast can take both positive and negative values. The discreteness result given in \cite{te-cbc} as well as the analysis (for real-valued contrast) in \cite{te-cbc2} depends on the contrast being of one sign. The existence of these transmission eigenvalues is still an open question for the case of a sign changing coefficient. We also note that Theorem \ref{discrete} holds for the case when the refractive index is complex-valued as long as $|n-1|^{-1} \in L^{\infty}( D)$. 

\subsection{Convergence as the conductivity goes to infinity}
In this section, we study the convergence as $\eta$ tends to infinity. Here, we will assume that $\eta \in L^{\infty}(\partial D)$ and satisfies that $\eta_{\text{min}} \to \infty$. This has been studied for the analogous Maxwell's transmission eigenvalues with a conductive boundary condition in \cite{two-eig-cbc}. The analysis in  \cite{two-eig-cbc} for the Maxwell's system is only established when the contrast is either positive or negative definite. By appealing to $T$ coercivity discussed in the previous section we will be able to study the convergence for a sign changing contrast. We also remark that the analysis as $\eta$ tends to zero in \cite{te-cbc2} can be augmented to work for a sign changing contrast as well. 

Now, we assume that the transmission eigenvalues $k_\eta \in \R_{+}$ form a bounded set as $\eta_{\text{min}} \to \infty$. From Theorem 3.2 in \cite{te-cbc2} we have that this is the case provided that contrast is positive (or negative) in the domain $D$. Also, since the eigenfunction $u_\eta$ is non-trivial we may assume that they are normalized in $X(D)$ i.e. $\|\Delta u_\eta \|_{L^2(D)}^2 =1$ for any $\eta$. Since the sequences $(k_\eta , u_\eta) \in \R_{+} \times X(D)$ are bounded as $\eta_{\text{min}} \to \infty$ we then conclude  that there exists $k_\infty $ and $u_\infty $ such that 
$$k_\eta  \to k_\infty  \quad \text{ and } \quad u_\eta \weakc u_\infty  \,\, \textrm{ in } \,\, X(D) \quad \textrm{ as } \,\, \eta_{\text{min}} \to \infty\,.$$
Recall, that the eigenpair satisfies 
$$ a(u_\eta ,\varphi)+b_{k_\eta ,\eta } (u_\eta ,\varphi) = 0  \quad \text{ for all } \quad \varphi \in X(D)$$
where the sesquilinear forms $a(\cdot \, , \cdot)$ and $b_{k ,\eta }(\cdot \, , \cdot)$ are defined as in \eqref{a-form}--\eqref{b-form} where the dependance on $\eta$ is made explicit. Here, we will let $b_{k ,0 }(\cdot \, , \cdot)$ denote the sesquilinear form without the boundary integral on $\partial D$. Therefore, we have that since $k_\eta$ and $u_\eta$ are bounded 
$$ | a(u_\eta ,\varphi)+b_{k_\eta , 0 } (u_\eta ,\varphi) |= \left| k_\eta ^2 \int\limits_{\partial D} \frac{1}{\eta} {\partial_{\nu} u_\eta}{\partial_{\nu} \overline{\varphi} } \, \dif s  \right| \leq \frac{C}{\eta_{\text{min}}} {\color{black} \| \partial_{\nu} \varphi  \|_{L^2(\partial D)} }$$
which tends to zero as $\eta_{\text{min}} \to \infty$ for any $\varphi \in X(D).$ By appealing to the convergence of the eigenvalues and weak convergence of the eigenfunctions we have that
$$ a(u_\infty  ,\varphi)+b_{k_\infty  ,0 } (u_\infty  ,\varphi) =0   \quad \text{ for all } \quad \varphi \in X(D)\,.$$
In order to prove the main result of this section we first must show the convergence of $u_\eta$ to $u_\infty$ in the $X(D)$ norm. 
\begin{lemma}\label{conv-lemma}
Assume that $|n-1|^{-1} \in L^{\infty}(D)$ and $k_\eta \in \R_{+}$ forms a bounded set as $\eta_{\text{min}} \to \infty$. Then $u_\eta \to u_\infty $ in $X(D)$ as $\eta_{\text{min}} \to \infty$. 
\end{lemma}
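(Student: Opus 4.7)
The plan is to use the $T$-coercivity identity $a(u,Tu)=\|\Delta u\|_{L^2(D)}^2$ established in the proof of Theorem \ref{discrete} in order to upgrade the weak convergence $u_\eta \weakc u_\infty$ in $X(D)$ to norm convergence. Since $X(D)$ is a Hilbert space with inner product inducing the norm $\|\Delta \cdot\|_{L^2(D)}$, it suffices to prove that $\|\Delta u_\eta\|_{L^2(D)}^2 \to \|\Delta u_\infty\|_{L^2(D)}^2$; weak convergence combined with norm convergence automatically yields strong convergence. By the $T$-coercivity identity and the variational equation satisfied by $(k_\eta,u_\eta)$, we have
\begin{align*}
\|\Delta u_\eta\|_{L^2(D)}^2 = a(u_\eta,T u_\eta) = -\,b_{k_\eta,\eta}(u_\eta,T u_\eta),
\end{align*}
and an identical identity holds for $(k_\infty,u_\infty)$ with $\eta$ replaced by the formal limit, namely $\|\Delta u_\infty\|_{L^2(D)}^2 = -b_{k_\infty,0}(u_\infty,T u_\infty)$. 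So the task reduces to showing $b_{k_\eta,\eta}(u_\eta,Tu_\eta) \to b_{k_\infty,0}(u_\infty,Tu_\infty)$.

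I would split this into two pieces. First, the boundary contribution in $b_{k_\eta,\eta}$ is handled by the estimate
\begin{align*}
\left| k_\eta^2 \int_{\partial D} \frac{1}{\eta}\, \partial_\nu u_\eta \, \partial_\nu \overline{Tu_\eta}\, \dif s \right| \leq \frac{k_\eta^2}{\eta_{\text{min}}} \|\partial_\nu u_\eta\|_{L^2(\partial D)} \|\partial_\nu Tu_\eta\|_{L^2(\partial D)},
\end{align*}
which tends to zero as $\eta_{\text{min}} \to \infty$ because $k_\eta$ is bounded, $T$ is a bounded isomorphism on $X(D)$, the normal traces are controlled by the trace inequality on $X(D)$, and $\|\Delta u_\eta\|_{L^2(D)} = 1$. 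Second, for the remaining interior term $b_{k_\eta,0}(u_\eta,Tu_\eta)$, I would use the compact embedding $X(D) \hookrightarrow H^1(D)$ (Rellich): the weak convergence $u_\eta \weakc u_\infty$ in $X(D)$ gives strong convergence in $H^1(D)$, and the same holds for $Tu_\eta \weakc Tu_\infty$ since $T$ is bounded. The terms $\int_D \nabla u_\eta \cdot \nabla \overline{Tu_\eta}\,\dif x$ and $k_\eta^4 \int_D u_\eta \overline{Tu_\eta}\,\dif x$ converge by this strong $H^1$ convergence together with $k_\eta \to k_\infty$. The mixed terms such as $\int_D \frac{1}{n-1}\overline{Tu_\eta}\, \Delta u_\eta\,\dif x$ pair a strongly convergent factor in $L^2(D)$ with a weakly convergent one in $L^2(D)$, so they pass to the limit as well.

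Combining the two pieces yields $\|\Delta u_\eta\|_{L^2(D)}^2 = -b_{k_\eta,\eta}(u_\eta,Tu_\eta) \to -b_{k_\infty,0}(u_\infty,Tu_\infty) = \|\Delta u_\infty\|_{L^2(D)}^2$. Together with $u_\eta \weakc u_\infty$ in the Hilbert space $X(D)$, the identity $\|u_\eta - u_\infty\|_{X(D)}^2 = \|u_\eta\|_{X(D)}^2 - 2\text{Re}(u_\eta,u_\infty)_{X(D)} + \|u_\infty\|_{X(D)}^2$ then forces $u_\eta \to u_\infty$ strongly in $X(D)$.

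The main obstacle is the term $\int_D \frac{1}{n-1} \overline{Tu_\eta}\, \Delta u_\eta\,\dif x$ (and its symmetric counterpart), since $\Delta u_\eta$ only converges weakly in $L^2(D)$. The point is that these are exactly the entries which make up the compact operator $B_k$ identified after equation \eqref{b-form}, so the weak-strong pairing produced by Rellich compactness is precisely what is needed; being careful that $T$ maps $X(D)$ into $X(D)$ (which is crucial so that $Tu_\eta$ retains the $H^1$-compactness) is the only subtlety, and this was already established in the discussion preceding Theorem \ref{discrete}.
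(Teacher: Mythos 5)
Your argument is correct and rests on exactly the same ingredients as the paper's proof: the $T$-coercivity identity $a(\cdot,T\cdot)=\|\Delta\cdot\|_{L^2(D)}^2$, the $O(1/\eta_{\text{min}})$ bound on the boundary integral via the trace estimate and the normalization $\|\Delta u_\eta\|_{L^2(D)}=1$, and the compact embedding of $X(D)$ into $H^1(D)$ to pass to the limit in the lower-order, weak--strong paired terms of $b_{k,0}$. The only real difference is the packaging: the paper writes the equation for the \emph{difference} and tests it against $T(u_\eta-u_\infty)$, so that $T$-coercivity produces $\|\Delta(u_\eta-u_\infty)\|_{L^2(D)}^2$ directly and the proof ends by driving the resulting four right-hand-side terms to zero; you instead test at $(u_\eta,Tu_\eta)$, deduce convergence of the norms $\|\Delta u_\eta\|_{L^2(D)}\to\|\Delta u_\infty\|_{L^2(D)}$ (using the limiting identity $a(u_\infty,\varphi)+b_{k_\infty,0}(u_\infty,\varphi)=0$ established before the lemma), and close with the Hilbert-space fact that weak convergence together with convergence of norms implies strong convergence. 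Both routes are sound: the paper's version avoids the final Radon--Riesz step, while yours avoids expanding $a(u_\eta-u_\infty,\cdot)$ into separate pieces; the estimates you must verify are term-for-term the same in either case.
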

\begin{proof}
In order to prove the claim we first notice that for any $\varphi \in X(D)$ 
\begin{eqnarray*}
 &&a\big( u_\eta - u_\infty , \varphi \big) \\
&& \hspace{0.4in}= b_{k_\infty , 0 }(u_\infty , \varphi ) - b_{k_\eta , 0 }(u_\infty , \varphi ) - b_{k_\eta , 0 }\big( u_\eta - u_\infty , \varphi \big) - k_\eta ^2 \int\limits_{\partial D} \frac{1}{\eta} {\partial_{\nu} u_\eta}{\partial_{\nu} \overline{\varphi} } \, \dif s. 
\end{eqnarray*}
We now need to estimate the righthand side of the equality and prove that it tends to zero as $\eta_{\text{min}} \to \infty$ for $\varphi = T(u_\eta - u_\infty)$. Then, by the $T$ coercivity we have that
$$ a\big( u_\eta - u_\infty  , T(u_\eta - u_\infty )  \big) = \|\Delta (u_\eta -u_\infty ) \|_{L^2(D)}^2$$
where $T$ is defined via \eqref{T-def} and showing that $ a\big( u_\eta - u_\infty  , T(u_\eta - u_\infty )  \big)$ tends to zero as $\eta_{\text{min}} \to \infty$ will give the result. We begin with 
\begin{eqnarray*}
&&b_{k_\infty  , 0 }(u_\infty  , \varphi ) - b_{k_\eta , 0 }(u_\infty  , \varphi )\\
&& \hspace{0.4in} = (k_\infty ^2 -k_\eta^2)\int\limits_D \frac{1}{n-1} ( \overline{\varphi}  \, \Delta u_\infty  + u_\infty  \, \Delta \overline{\varphi}) \, \dif x  \\
 && \hspace{0.4in}-  (k_\infty ^2 -k_\eta^2) \int\limits_D \grad u_\infty  \cdot \grad \overline{\varphi} \, \dif x   + (k_\infty ^4 -k_\eta^4)  \int\limits_D  u_\infty  \overline{\varphi} \, \dif x 
\end{eqnarray*}
for any $\varphi \in X(D)$. Notice that by since $k_\eta \to k_\infty $ as $\eta_{\text{min}} \to \infty$
$$ \big| b_{k_\infty , 0 }\big( u_\infty , T(u_\eta - u)  \big)  - b_{k_\eta , 0 }\big( u_\infty  , T(u_\eta - u_\infty )  \big)\big| \longrightarrow 0 \quad \textrm{ as } \,\, \eta_{\text{min}} \to \infty$$
where we have used the fact that $T$ is a bounded operator and $u_\eta - u_\infty $ is bounded in $X(D)$.
Just as before we have that 
$$ \left| k_\eta ^2 \int\limits_{\partial D} \frac{1}{\eta} {\partial_{\nu} u_\eta}{\partial_{\nu} \overline{T(u_\eta - u_\infty)} } \, \dif s  \right| \leq \frac{C}{\eta_{\text{min}}} \longrightarrow 0 \quad \textrm{ as } \,\, \eta_{\text{min}} \to \infty$$ 
where we have used the fact that $T$ is a bounded operator and $u_\eta - u_\infty$ is bounded in $X(D)$. Now, notice that 
\begin{eqnarray*}
&&b_{k_\eta , 0 }\big( u_\eta - u_\infty , T(u_\eta - u_\infty)  \big) \\
&& \hspace{0.4in} =  k_\eta^2 \int\limits_D \frac{1}{n-1} \Big( \overline{T(u_\eta - u_\infty) }  \, \Delta (u_\eta - u_\infty) + (u_\eta - u_\infty) \, \Delta \overline{T(u_\eta - u_\infty) } \Big) \, \dif x   \\
 && \hspace{0.4in} -k_\eta^2 \int\limits_D \grad (u_\eta - u_\infty) \cdot \grad \overline{T(u_\eta - u_\infty) } \, \dif x + k_\eta^4  \int\limits_D  (u_\eta - u_\infty) \overline{T(u_\eta - u_\infty) } \, \dif x.  
\end{eqnarray*}
By compact embedding of $H^2(D)$ into $H^1(D)$ we have that 
$$\| u_\eta - u_\infty \|_{H^1(D)} \quad \text{ and } \quad \| T(u_\eta - u_\infty) \|_{H^1(D)}$$ 
tends to zero as $\eta_{\text{min}} \to \infty$. Then by the Cauchy-Schwarz inequality as well as the fact the $k_\eta \in \R_{+}$ forms a bounded set we conclude that 
$$ b_{k_\eta , 0 }\big( u_\eta - u_\infty , T(u_\eta - u_\infty)  \big) \longrightarrow 0 \quad \textrm{ as } \,\, \eta_{\text{min}} \to \infty\,.$$ 
From this we have that 
$$ \|\Delta (u_\eta -u_\infty) \|_{L^2(D)}^2 = a\big( u_\eta - u_\infty , T(u_\eta - u_\infty)  \big) \longrightarrow 0 \quad \textrm{ as } \,\, \eta_{\text{min}} \to \infty$$
which proves the claim. 
\end{proof}

Lemma \ref{conv-lemma} gives that the corresponding eigenpair will converge as $\eta_{\text{min}} \to \infty.$ The natural question is to determine what specific limiting values one obtains for the eigenpair. One would expect the limiting values would be an eigenpair for some associated limiting eigenvalue problem. Therefore, we recall that for a given eigenpair $(k_\eta , u_\eta) \in \R_{+} \times X(D)$ there is a corresponding $(v_\eta ,w_\eta ) \in L^2(D) \times L^2(D)$ such that  
$$v_\eta = -\frac{1}{k_\eta^2(n-1)}(\Delta u_\eta + k_\eta^2 n u_\eta) \quad \text{ and } \quad w_\eta = -\frac{1}{k_\eta^2(n-1)}(\Delta u_\eta + k_\eta^2 u_\eta)\,.$$
Here, $(v_\eta ,w_\eta )$ satisfy the Helmholtz equation and `modified' Helmholtz equation in the distributional sense with 
$$\Delta v_\eta +k_\eta^2 v_\eta=0 \quad \text{and} \quad \Delta w_\eta + k_\eta^2 n w_\eta=0  \quad \textrm{ in } \,\,\,  D\,.$$
Due to the convergence 
$$k_\eta  \to k_\infty  \quad \text{ and } \quad u_\eta \to u_\infty  \,\, \textrm{ in } \,\, X(D) \quad \textrm{ as } \,\, \eta_{\text{min}} \to \infty$$
we can conclude that there is a $(v_\infty ,w_\infty ) \in L^2(D) \times L^2(D)$ such that 
$$v_\eta  \to v_\infty  \quad \text{ and } \quad w_\eta \to w_\infty  \,\, \textrm{ in } \,\, L^2(D) \quad \textrm{ as } \,\, \eta_{\text{min}} \to \infty$$
that are solutions to the Helmholtz equation and `modified' Helmholtz equation in the distributional sense with
$$\Delta v_\infty +k_\infty^2 v_\infty=0 \quad \text{and} \quad \Delta w_\infty + k_\infty^2 n w_\infty=0  \quad \textrm{ in } \,\,\,  D\,.$$
From this we see that it seems that the limiting value $k_\eta^2$ may be either a Dirichlet eigenvalue or `modified' Dirichlet eigenvalue for the domain $D$. The proceeding result proves this assertion. 

\begin{theorem}\label{conv-thm}
Assume that $|n-1|^{-1} \in L^{\infty}(D)$ and $k_\eta \in \R_{+}$ forms a bounded set as $\eta_{\text{min}} \to \infty$. Then $k_\eta \to k_\infty $ as $\eta_{\text{min}} \to \infty$ where $k_\infty^2$ is either Dirichlet eigenvalue or `modified' Dirichlet eigenvalue for the domain $D$.
\end{theorem}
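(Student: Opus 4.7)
The plan is to use the representations $v_\eta = -\frac{1}{k_\eta^2(n-1)}(\Delta u_\eta + k_\eta^2 n u_\eta)$ and $w_\eta = -\frac{1}{k_\eta^2(n-1)}(\Delta u_\eta + k_\eta^2 u_\eta)$ together with the strong convergence from Lemma \ref{conv-lemma} and the conductive boundary condition \eqref{teprob2} to identify $(v_\infty,w_\infty)$ as either a Dirichlet eigenpair of $-\Delta$ or a ``modified'' Dirichlet eigenpair of $-\Delta - k^2 n$. Subtracting the two equations in \eqref{teprob2} and using $u_\eta = w_\eta - v_\eta$ yields $\partial_\nu u_\eta = \eta\, v_\eta$ on $\partial D$, so $v_\eta\big|_{\partial D} = \frac{1}{\eta}\,\partial_\nu u_\eta\big|_{\partial D}$. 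The trace inequality from Theorem \ref{fk-inequ} combined with the normalization $\|\Delta u_\eta\|_{L^2(D)}=1$ then gives $\|v_\eta\|_{L^2(\partial D)} \leq C/\eta_{\text{min}} \to 0$, and since the first equation in \eqref{teprob2} says $w_\eta\big|_{\partial D} = v_\eta\big|_{\partial D}$, the same decay applies to $w_\eta$.

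I would then pass to the limit to obtain a Dirichlet trace condition on $v_\infty$. For any $\varphi \in X(D)$, Green's second identity applied with $\Delta v_\eta = -k_\eta^2 v_\eta$ and $\varphi\big|_{\partial D} = 0$ produces
$$\int\limits_D v_\eta\,(\Delta \overline{\varphi} + k_\eta^2 \overline{\varphi}) \, \dif x = \int\limits_{\partial D} v_\eta\, \partial_\nu \overline{\varphi} \, \dif s.$$
Letting $\eta_{\text{min}}\to\infty$ and using $v_\eta \to v_\infty$ in $L^2(D)$, $k_\eta \to k_\infty$, and the $L^2(\partial D)$ decay just established, the right-hand side vanishes and I obtain
$$\int\limits_D v_\infty\,(\Delta \overline{\varphi} + k_\infty^2 \overline{\varphi}) \, \dif x = 0 \quad \text{for all} \quad \varphi \in X(D).$$
Combined with $\Delta v_\infty + k_\infty^2 v_\infty = 0$ in the distributional sense and the elliptic regularity afforded by the standing assumption on $\partial D$, this transposition identity forces $v_\infty \in H^1_0(D)$. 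Since $u_\infty \in X(D) \subset H^1_0(D)$ and $w_\infty = u_\infty + v_\infty$, we also get $w_\infty \in H^1_0(D)$ with $\Delta w_\infty + k_\infty^2 n w_\infty = 0$.

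The dichotomy is closed by a non-triviality argument. The normalization $\|\Delta u_\eta\|_{L^2(D)}=1$ is preserved by the strong $X(D)$-convergence of Lemma \ref{conv-lemma}, so $u_\infty \neq 0$, and since $u_\infty = w_\infty - v_\infty$ the pair $(v_\infty, w_\infty)$ cannot both vanish. If $v_\infty \neq 0$, then $k_\infty^2$ is a Dirichlet eigenvalue of $-\Delta$ on $D$ with eigenfunction $v_\infty$; otherwise $w_\infty = u_\infty \neq 0$, and $k_\infty^2$ is a ``modified'' Dirichlet eigenvalue for the operator $-\Delta - k^2 n$ on $D$ with eigenfunction $w_\infty$.

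The main obstacle will be rigorously extracting the vanishing Dirichlet trace of $v_\infty$, since $v_\infty$ is a priori only in $L^2(D)$. The transposition identity tested against $X(D)$, combined with the $L^2(\partial D)$ decay of $v_\eta\big|_{\partial D}$, is the delicate point, and it is precisely here that the regularity hypothesis on $\partial D$ (used to identify $X(D)$ with $H^2 \cap H^1_0$ and to promote the very weak trace statement to $v_\infty \in H^1_0(D)$) plays its essential role.
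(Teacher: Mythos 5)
Your proposal is correct and follows essentially the same route as the paper's proof: show the boundary traces $v_\eta|_{\partial D} = \frac{1}{\eta}\partial_\nu u_\eta$ and $w_\eta|_{\partial D}=v_\eta|_{\partial D}$ decay like $1/\eta_{\text{min}}$ under the normalization $\|\Delta u_\eta\|_{L^2(D)}=1$, pass to the limit to place $(v_\infty,w_\infty)$ in $H^1_0(D)\times H^1_0(D)$ satisfying the Helmholtz and modified Helmholtz equations, and rule out $v_\infty=w_\infty=0$ via $u_\infty=w_\infty-v_\infty\neq 0$ from Lemma \ref{conv-lemma}. The only cosmetic differences are that you use the $L^2(\partial D)$ trace bound and a Green's second identity transposition argument where the paper invokes the $H^{1/2}(\partial D)$ trace theorem and Green's first identity.
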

\begin{proof} 
To prove the claim we must show that either $v_\infty$ or $w_\infty$ are non-trivial with zero trace on $\partial D$. Now, recall that 
$$v_\eta = \frac{1}{\eta} \partial_{\nu} u_\eta \quad \text{ and } \quad w_\eta = v_\eta \quad \text{ on } \,\, \partial D\,.$$ 
Since we have assumed that $\|\Delta u_\eta \|_{L^2(D)}^2 =1$ the trace theorem implies that 
$$ \| v_\eta\|_{H^{1/2}(\partial D)} \quad \text{ and } \quad  \| w_\eta\|_{H^{1/2}(\partial D)} $$
tend to zero as $\eta_{\text{min}} \to \infty$. This implies that $v_\infty$ and $w_\infty$ have zero trace on $\partial D$. By appealing to Green's first theorem and the strong $L^2(D)$ convergence, we have that $(v_\infty ,w_\infty ) \in H^1_0(D) \times H^1_0(D)$ such that 
$$\Delta v_\infty +k_\infty^2 v_\infty=0 \quad \text{and} \quad \Delta w_\infty + k_\infty^2 n w_\infty=0  \quad \textrm{ in } \,\,\,  D\,.$$
It is left to prove that either $v_\infty$ or $w_\infty$ are non-trivial. Assume on the contrary that $v_\infty=w_\infty=0$ in $D$. Then we would have that $u_\infty = w_\infty - v_\infty=0$  in $D$ which contradicts the fact that $\|\Delta u_\infty \|_{L^2(D)}^2 =1$ given by Lemma \ref{conv-lemma}. This proves the claim.  

\end{proof}

\section{Boundary Integral Equations}\label{bie}
In this section, we will derive a new boundary integral equations for various interior transmission eigenvalue problems which is based on the idea of \cite{cakonikress}.  Recall, that in general the problem under consideration is the following: Find non-trival solution $(w,v)\in L^2(D) \times L^2(D)$ and $k\in \mathbb{C}$ such that
\begin{align*}
\Delta w +n k^2 w=0 \quad &\textrm{ in } \,  D \\
\Delta v + \tilde{n} k^2 v=0  \quad &\textrm{ in } \,  D  \\
 w=v  \quad &\textrm{ on } \partial D\\
 {\partial_{\nu} w}-{\partial_\nu v}=\eta w \quad &\textrm{ on } \partial D
\end{align*} 
is satisfied, where $n$, $\tilde{n}$ as well as $\eta$ are given. Given different parameters for $\tilde{n}$ and $\eta$ we obtain substantially different eigenvalue problems. Here, we will now assume that the parameters $n$, $\tilde{n}$ {\color{black}and $\eta$} are constant. 
If $\tilde{n}=1$ and $\eta=0$, then we are dealing with the `classical' interior transmission eigenvalue problem.
If $\tilde{n}=1$ and $\eta>0$, then we have the interior transmission eigenvalue problem with conductive boundary condition.
If $\tilde{n}=0$ and $\eta>0$, then we have the zero-index interior transmission eigenvalue problem with conductive boundary condition.
To solve this problem, we use boundary integral equations. We make the single-layer ansatz as in \cite{cakonikress}. Precisely, we use
\[w(x)=\mathrm{SL}_{k\sqrt{n}}\varphi(x)\qquad\text{and}\qquad v(x)=\mathrm{SL}_{k\sqrt{\tilde{n}}}\psi(x)\,,\qquad x\in D\,,\]
where 
\[\mathrm{SL}_{k}\phi(x)=\int_{\partial D}\Phi_k(x,y)\phi(y)\,\mathrm{d}s\,,\qquad  x\in D\]
with $\Phi_k(x,y)$ the fundamental solution of the Helmholtz equation in two dimensions. Here, $\varphi$ and $\psi$ are yet unknown functions on $\partial D$. In order to obtain an integral equation for the  eigenvalue problem we will need the Dirichlet-to-Neumann mapping for the Helmholtz equation as in \cite{cakonikress}. To this end, we will assume that the boundary of the domain is of class $\mathscr{C}^{2,1}$. This gives that on the boundary $\partial D$, we have
\[w(x)=\mathrm{S}_{k\sqrt{n}}\varphi(x)\qquad\text{and}\qquad  v(x)=\mathrm{S}_{k\sqrt{\tilde{n}}}\psi(x)\,,\]
where
\[\mathrm{S}_{k}\phi(x)=\int_{\partial D}\Phi_k(x,y)\phi(y)\,\mathrm{d}s\,,\qquad x\in \partial D\,.\]
Taking the normal derivative and the jump conditions, yields
\[\partial_\nu w(x)=\left(\frac{1}{2}\mathrm{I}+\mathrm{K}'_{k\sqrt{n}}\right)\varphi(x)\qquad\text{and}\qquad  \partial_\nu v(x)=\left(\frac{1}{2}\mathrm{I}+\mathrm{K}'_{k\sqrt{\tilde{n}}}\right)\psi(x)\,,\]
where 
\[\mathrm{K}'_{k}\phi(x)=\int_{\partial D}\partial_{\nu(x)}\Phi_k(x,y)\phi(y)\,\mathrm{d}s\,,\qquad x\in \partial D\,.\]
Hence, using the first boundary condition $w={\color{red}v}$, we obtain the following Dirichlet-to-Neumann mappings (assuming $k\sqrt{n}$ and $k\sqrt{\tilde{n}}$ are not eigenvalues of $-\Delta$ in $D$)
\[\partial_\nu w=\left(\frac{1}{2}\mathrm{I}+\mathrm{K}'_{k\sqrt{n}}\right)\mathrm{S}_{k\sqrt{n}}^{-1}w \qquad\text{and}\qquad  \partial_\nu v=\left(\frac{1}{2}\mathrm{I}+\mathrm{K}'_{k\sqrt{\tilde{n}}}\right)\mathrm{S}_{k\sqrt{\tilde{n}}}^{-1}w\,\]
since the boundary of the domain is of class $\mathscr{C}^{2,1}$(see for e.g. \cite{cakonikress}).
Using the boundary condition  ${\partial_{\nu} w}-{\partial_\nu v}=\eta w$  yields
\[\left[\left(\frac{1}{2}\mathrm{I}+\mathrm{K}'_{k\sqrt{n}}\right)\mathrm{S}_{k\sqrt{n}}^{-1}-\left(\frac{1}{2}\mathrm{I}+\mathrm{K}'_{k\sqrt{\tilde{n}}}\right)\mathrm{S}_{k\sqrt{\tilde{n}}}^{-1}-\eta \mathrm{I}\right]w=0\,,\]
which can be written abstractly as
\begin{eqnarray}
M(k;n,\tilde{n},\eta)w=0\,.
\label{sys}
\end{eqnarray}
That means, that one has to solve a non-linear eigenvalue problem in $k$ for given constants $n$, $\tilde{n}$, and $\eta$. The boundary integral operator arising in (\ref{sys}) are approximated 
by boundary element collocation method using quadratic interpolation (see  \cite{kleefeldITP} for details). The resulting non-linear eigenvalue problem is solved by the Beyn's algorithm \cite{beyn} using a circle centered at $(\mu,0)$ with radius 0.5 as contour in the complex plane.
The contour integral are approximated by 24 equidistant points on this circle. {\color{black}Supportive theoretical results for the approximation of the new boundary integral equation is subject to future research. It can be expected to follow along the same lines given in \cite{ETNA} for the approximation of the boundary integral equation for the solution of the exterior Neumann problem in combination with Beyn's method \cite{beyn} also used in \cite{HabilKleefeld}.}

\section{Computational Experiments }\label{numerics-section}
In this section, we provide extensive numerical experiments to test our implementation, to show the limiting behavior for small and large $\eta$, and to estimate the index of refraction from known interior transmission eigenvalues. In our experiments we implement Beyn algorithm \cite{beyn} to find the eigenvalues in the circle centered at $(\mu,0)$ with radius 0.5 in the complex plane.
\subsection{Testing the boundary element collocation method}\label{part1}
First, we use $n=4$, $\tilde{n}=1$, and $\eta=0$ within (\ref{sys}) to compute the `classical' interior transmission eigenvalues for a domain $D$ being the unit circle. With $\mu=3.1$ and $40$ collocation points, we obtain the 
interior transmission eigenvalues
$2.9026$, $2.9026$, $3.3842$, $3.4121$, and $3.4121$ which are in agreement with the one given in \cite[Figure 6]{mfs-te}. All presented digits are correct. This can also be compared by computing the zeros of the determinant of
\[\left(
\begin{array}{cc}
 J_m(k\sqrt{n}) & -J_m(k)\\
 \sqrt{n}J_m'(k\sqrt{n}) & -J_m'(k)\
\end{array}
\right) \quad \textrm{ for } \,\, m\in \mathbb{N} \cup \{0\}.\]
For the domain $D$ being an ellipse with semi-axis $1$ and $0.8$, we obtain interior transmission eigenvalues $3.1353$, $3.4852$, $3.5473$, and $3.8843$ using $\mu=3.4$ and $40$ collocation points. All digits are correct as one can compare with \cite[Figure 6]{mfs-te}.

Second, we now investigate the computation of the conductive interior transmission eigenvalues via our new boundary integral equation. To {\color{black}begin}, we use $n=4$, $\tilde{n}=1$, and $\eta=1$ within (\ref{sys}) to compute the eigenvalues for the unit sphere. We use $\mu=3.1$ and $160$ collocation points to obtain $2.7741$, $2.7741$, $3.2908$, 
$3.3122$ and $3.3122$ with five digits accuracy. 
The results are in agreement with \cite[Table 8]{te-cbc2}. All reported digits are correct as one can check also by computing the zeros of the determinant of
\[\left(       
\begin{array}{cc}
 J_m(k\sqrt{n}) & -J_m(k)\\
 k\sqrt{n}J_m'(k\sqrt{n})-\eta J_m(k\sqrt{n}) & -k J_m'(k)\
\end{array}
\right) \quad \textrm{ for } \,\, m\in \mathbb{N} \cup \{0\}.\]
We obtain $3.0034$, $3.3565$, $3.7819$, and $3.4485$ for an ellipse with semi-axis $1$ and $0.8$ when using $\mu=3.3$ and $160$ collocation points.

Third, we compute zero-index conductive interior transmission eigenvalues; that is, we choose the parameters $n=4$, $\tilde{n}=0$, and $\eta=1$ in (\ref{sys}). We obtain $1.7840$, $2.4735$, $2.4735$, $3.1151$, $3.1151$, and $3.4363$ when using 
$\mu=2.0$ and $\mu=3.0$ with 
$160$ collocation points. The results are in agreement with the zeros of the determinant
\[\left(       
\begin{array}{cc}
 J_m(k\sqrt{n}) & -1\\
 k\sqrt{n}J_m'(k\sqrt{n})-\eta J_m(k\sqrt{n}) & -m\
\end{array}
\right) \quad \textrm{ for } \,\, m\in \mathbb{N} \cup \{0\}\]
see \cite[p. 22]{two-eig-cbc} for details.

\subsection{Limiting behavior for small and large conductivity}
First, we see what happens if we let $\eta$ approach zero for the conductive interior transmission problem using a unit circle with $n=4$. From \cite{te-cbc2} we have that as $\eta$ approach zero the conductive interior transmission eigenvalues converge to the `classical' interior transmission eigenvalues. As expected the convergence is linear as one can also see in Table \ref{table1} {\color{black}by checking the estimated order of convergence (EOC) which is given by }
$${\color{black}\mathrm{EOC}=\log\left(\epsilon_\eta/\epsilon_{\eta/2}\right)/\log(2) \quad \text{where} \quad \epsilon_\eta = |k_1(\eta)-k_1(0)|.}$$
\begin{table}[!ht]   
\centering
 \begin{tabular}{l|cc}
  $\eta$ & $k_1(\eta)$ & EOC\\
  \hline
1/2    & 2.8416 &        \\
1/4    & 2.8730 &        \\
1/8    & 2.8880 & 1.0621 \\
1/16   & 2.8954 & 1.0331 \\
1/32   & 2.8990 & 1.0170 \\
1/64   & 2.9008 & 1.0086 \\
1/128  & 2.9017 & 1.0043 \\
1/256  & 2.9022 & 1.0022 \\
1/512  & 2.9024 & 1.0011 \\
1/1024 & 2.9025 & 1.0005
 \end{tabular}
  \caption{\label{table1}Linear convergence of the first conductive interior transmission eigenvalue for $\eta\rightarrow 0$ to the first `classical' interior transmission eigenvalue for the unit circle with $n=4$.}
\end{table}
Next, we let $\eta$ approach $\infty$ in the conductive interior transmission problem. We again use a unit circle with $n=4$. 
\begin{table}[!ht]   
\centering
 \begin{tabular}{r|cc|cc}
  $\eta$ & $k_1(\eta)$ & EOC & $k_2(\eta)$ & EOC\\
  \hline
80    & 2.5998 &      &3.7756&      \\
160   & 2.5838 &      &3.8061&      \\
320   & 2.5758 &0.9959&3.8194&1.2048\\
640   & 2.5718 &0.9983&3.8256&1.0776\\
1280  & 2.5698 &0.9992&3.8287&1.0345\\
2560  & 2.5688 &0.9996&3.8302&1.0163\\
5120  & 2.5683 &0.9998&3.8310&1.0079\\
10240 & 2.5681 &0.9999&3.8313&1.0039\\
20480 & 2.5679 &1.0000&3.8315&1.0019\\
40960 & 2.5679 &1.0014&3.8316&1.0024
 \end{tabular}
  \caption{\label{table2}Linear convergence of the first two conductive interior transmission eigenvalue for $\eta\rightarrow \infty$ towards interior Dirichlet eigenvalues for the unit circle with $n=4$.}
\end{table}
As we can see in Table \ref{table2}, we obtain a linear convergence towards $2.567\,811\,150\,920\,341$ which is the `modified' Dirichlet eigenvalue of $\Delta u+\tau u=0$ with $\tau=2\lambda$ 
(first zero of the Bessel function of the first kind of order two divided by two) 
and towards $3.831\,705\,970\,207\,512$ which is the Dirichlet eigenvalue of  $\Delta u+\lambda u=0$ (first zero of the Bessel function of the first kind of order one). Recall, that the convergence presented in Table \ref{table2} is predicted by Theorem \ref{conv-thm} but the convergence rate has yet to be justified.

Likewise, we can check the convergence as $\eta$ tends to infinity when considering the zero-index conductive interior transmission problem. We use the unit circle with $n=4$ and obtain a linear convergence against the `modified' Dirichlet eigenvalues as shown in Table \ref{table3} which is predicted by Theorem 3.6 of \cite{two-eig-cbc} but the convergence rate has yet to be justified.
\begin{table}[!ht]   
\centering
 \begin{tabular}{r|cc|cc|cc}
  $\eta$ & $k_1(\eta)$ & EOC & $k_2(\eta)$ & EOC & $k_3(\eta)$ & EOC\\
  \hline
80    & 1.9396 &      &2.5993&      &3.2287&      \\
160   & 1.9278 &      &2.5837&      &3.2097&      \\
320   & 1.9218 &0.9919&2.5758&0.9778&3.2000&0.9638\\
640   & 1.9188 &0.9963&2.5718&0.9894&3.1950&0.9825\\
1280  & 1.9173 &0.9982&2.5698&0.9948&3.1926&0.9914\\
2560  & 1.9166 &0.9991&2.5688&0.9974&3.1913&0.9957\\
5120  & 1.9162 &0.9996&2.5683&0.9987&3.1907&0.9979\\
10240 & 1.9160 &0.9998&2.5681&0.9994&3.1904&0.9989\\
20480 & 1.9159 &0.9999&2.5679&0.9997&3.1902&0.9995\\
40960 & 1.9159 &1.0014&2.5679&1.0013&3.1902&1.0011
 \end{tabular}
  \caption{\label{table3}Linear convergence of the first three zero-index conductive interior transmission eigenvalue for $\eta\rightarrow \infty$ towards the `modified'  Dirichlet eigenvalues for the unit circle with $n=4$.}
\end{table}
The three `modified' Dirichlet eigenvalues are given as $1.915\,852\,985\,103\,756$, $2.567\,811\,150\,920\,341$, and $3.190\,080\,947\,961\,992$ which are the zeros of the Bessel function of the first kind or order 1, 2, and 3 
divided by two, respectively. Note that we also tried different $n$, for example $n=3$. We obtain for $\eta=40960$ $2.2123$, $2.9651$, $3.6837$, $4.3812$, and $4.9964$ which are in agreement with the  `modified'  Dirichlet eigenvalues $2.212\,236\,473\,354\,803$, $2.965\,052\,918\,423\,964$, $3.683\,588\,188\,085\,106$, $4.381\,131\,547\,263\,831$, and $4.996\,232\,140\,012\,951$. 
If we use an ellipse with semi-axis $1$ and $0.8$, we obtain a linear convergence as well. The values for $\eta=40960$ are $1.3602$ and $1.5706$ for $n=4$ and $n=3$, respectively. They agree with the  `modified' Dirichlet eigenvalues which are given as $1.3601$ and $1.5705$.

Now, we focus on a scattering object that has different piecewise index of refractions. We consider a circle with radius $R$ and inside there is a circle with radius $r$. 
The interior circle has contrast $n_1$ and the remaining part of the circle has contrast $n_2$. In the sequel, we call this a double layer circle.
Following the approach of \cite[p. 198]{cosso} one needs to compute the zeros of the determinant 
\[\left(
\begin{array}{cccc}
 \scriptscriptstyle -J_m(kR)   & \scriptscriptstyle H_m^{(1)}(kR\sqrt{n_2})                                          & \scriptscriptstyle H_m^{(2)}(kR\sqrt{n_2})                                          & \scriptscriptstyle 0\\
 \scriptscriptstyle -kJ_m'(kR) & \scriptscriptstyle k\sqrt{n_2}H_m^{(1)'}(kR\sqrt{n_2})-\eta  H_m^{(1)}(kR\sqrt{n_2})& \scriptscriptstyle k\sqrt{n_2}H_m^{(2)'}(kR\sqrt{n_2}) -\eta H_m^{(2)}(kR\sqrt{n_2})& \scriptscriptstyle 0\\
 \scriptscriptstyle 0          & \scriptscriptstyle H_m^{(1)}(kr\sqrt{n_2})                                          & \scriptscriptstyle H_m^{(2)}(kr\sqrt{n_2})                                          & \scriptscriptstyle -J_m(kr\sqrt{n_1})\\
 \scriptscriptstyle 0          & \scriptscriptstyle k\sqrt{n_2}H_m^{(1)'}(kr\sqrt{n_2})                              & \scriptscriptstyle k\sqrt{n_2}H_m^{(2)'}(kr\sqrt{n_2})                              & \scriptscriptstyle -k\sqrt{n_1}J_m'(kr\sqrt{n_1})
\end{array}
\right)\]
for $m\in \mathbb{N} \cup \{0\}$ to obtain the conductive interior transmission eigenvalues. 
We use $R=1$, $r=1/2$, $n_1=1/2$, $n_2=4$ to model a sign changing contrast. With $m\in \mathbb{N} \cup \{0\}$ we obtain the results in Table \ref{table4}. 
\begin{table}[!ht]
\centering
 \begin{tabular}{r|cc|cc|cc}
  $\eta$ & $k_1(\eta)$ & EOC & $k_2(\eta)$ & EOC & $k_3(\eta)$ & EOC\\
  \hline
80    & 2.3772 &      &3.2053&      &3.4197&      \\
160   & 2.3904 &      &3.1992&      &3.4124&      \\
320   & 2.3975 &0.9076&3.1975&1.8156&3.4094&1.3075\\
640   & 2.4011 &0.9547&3.1969&1.6568&3.4081&1.1942\\
1280  & 2.4030 &0.9776&3.1967&1.4819&3.4075&1.1111\\
2560  & 2.4039 &0.9888&3.1966&1.3165&3.4072&1.0598\\
5120  & 2.4044 &0.9944&3.1966&1.1881&3.4071&1.0311\\
10240 & 2.4046 &0.9972&3.1966&1.1040&3.4070&1.0159\\
20480 & 2.4047 &0.9986&3.1966&1.0549&3.4070&1.0080\\
40960 & 2.4048 &1.0007&3.1966&1.0296&3.4069&1.0054
 \end{tabular}
  \caption{\label{table4} Linear convergence of the first three conductive interior transmission eigenvalue for $\eta\rightarrow \infty$ towards either the Dirichlet eigenvalues or the `modified' Dirichlet eigenvalues for the double layer circle with $n_1=1/2$ and $n_2=4$.}
\end{table}
As can see in Table \ref{table4}, the values converge for $\eta\rightarrow\infty$ towards either the Dirichlet eigenvalue or the `modified' Dirichlet eigenvalue. Similarly, we can calculate the zero-index conductive interior transmission eigenvalues for a double layer circle by computing the zeros of the determinant
\[\left(
\scriptscriptstyle
\begin{array}{cccc}
 \scriptscriptstyle -R^{m}    &\scriptscriptstyle H_m^{(1)}(kR\sqrt{n_2})                                          &\scriptscriptstyle H_m^{(2)}(kR\sqrt{n_2})                                          &\scriptscriptstyle 0\\
 \scriptscriptstyle -mR^{m-1} &\scriptscriptstyle k\sqrt{n_2}H_m^{(1)'}(kR\sqrt{n_2})-\eta  H_m^{(1)}(kR\sqrt{n_2})&\scriptscriptstyle k\sqrt{n_2}H_m^{(2)'}(kR\sqrt{n_2}) -\eta H_m^{(2)}(kR\sqrt{n_2})&\scriptscriptstyle 0\\
 \scriptscriptstyle 0         &\scriptscriptstyle H_m^{(1)}(kr\sqrt{n_2})                                          &\scriptscriptstyle H_m^{(2)}(kr\sqrt{n_2})                                          &\scriptscriptstyle -J_m(kr\sqrt{n_1})\\
 \scriptscriptstyle 0         &\scriptscriptstyle k\sqrt{n_2}H_m^{(1)'}(kr\sqrt{n_2})                              &\scriptscriptstyle k\sqrt{n_2}H_m^{(2)'}(kr\sqrt{n_2})                              &\scriptscriptstyle -k\sqrt{n_1}J_m'(kr\sqrt{n_1})
\end{array}
\right)\]
for $m\in \mathbb{N} \cup \{0\}$. Using the same parameters as before yields the results in Table \ref{table5} showing that the zero-index conductive interior transmission eigenvalues converge to the `modified' Dirichlet eigenvalues as $\eta$ tends to infinity.
\begin{table}[!ht]   
\centering
 \begin{tabular}{r|cc|cc|cc}
  $\eta$ & $k_1(\eta)$ & EOC & $k_2(\eta)$ & EOC & $k_3(\eta)$ & EOC\\
  \hline
80    & 3.1214 &      &3.1301&      &3.3478&      \\
160   & 3.1225 &      &3.1648&      &3.3774&      \\
320   & 3.1228 &1.5965&3.1825&0.9740&3.3930&0.9192\\
640   & 3.1229 &1.4352&3.1909&1.0675&3.4009&0.9925\\
1280  & 3.1230 &1.2811&3.1946&1.2060&3.4046&1.0850\\
2560  & 3.1230 &1.1645&3.1960&1.3974&3.4062&1.2368\\
5120  & 3.1230 &1.0899&3.1964&1.6099&3.4067&1.4643\\
10240 & 3.1230 &1.0472&3.1965&1.8080&3.4069&1.7639\\
20480 & 3.1230 &1.0242&3.1966&1.9958&3.4069&2.2117\\
40960 & 3.1230 &1.0137&3.1966&2.2537&3.4069&3.9809
 \end{tabular}
  \caption{\label{table5}Linear convergence of the first three zero-index conductive interior transmission eigenvalue for $\eta\rightarrow \infty$ towards the `modified' Dirichlet eigenvalues for the double layer circle with 
  $n_1=1/2$ and $n_2=4$.}
\end{table}

\subsection{Estimation of the index of refraction}
As we have seen in Table \ref{table1} the `classical' transmission eigenvalues are obtained when $\eta$ approaches zero in the conductive interior transmission problem. Here,  we let $k(n ; \eta)$ denote the transmission eigenvalue for refractive index $n$ and conductivity $\eta$. From the convergence results as $\eta$ tends to zero, we have the conductive transmission eigenvalues tend to the classical transmission eigenvalues. When $\eta$ tends to infinity, the zero-index conductive transmission eigenvalues tend to the `modified' Dirichlet eigenvalues. From this, we now show that the refractive index can be estimated provided $\eta$ is sufficently small and unknown. 

One can estimate the index of refraction by solving for $n_\mathrm{approx}$ in the non-linear equation $k_1(n;\eta)=k_1(n_\mathrm{approx})$ for given small $\eta$ where $k_1(n)$ denotes the first classical transmission eigenvalue. 
For the unit circle with $n=4$ and $\eta=1/2$ respectively $\eta=1/10$, to obtain $n_\mathrm{approx}=3.897\,441\,361\,498\,941$ 
and $n_\mathrm{approx}=3.979\,992\,664\,293\,090$, respectively.
We have seen in Table \ref{table3} that the zero-index conductive transmission eigenvalues converge linearly towards the `modified' Dirichlet eigenvalues denoted $\tau(n)$ as $\eta$ tends to infinity. That means, we can estimate the  index of refraction by solving the non-linear equation $k_1(n;\eta)=\tau_1(n_\mathrm{approx})$ for large $\eta$. 
In our examples we take for example $\eta=100$, $\eta=200$, and $\eta=1000$.
Using $n=4$ and these $\eta$'s for the unit circle gives the approximated index of refraction $3.921\,606\,411\,761\,363$, $3.960\,400\,848\,027\,610$, and $3.992\,016\,007\,078\,786$, respectively.
Likewise using $n=3$ with the same $\eta$'s for the unit circle yields the approximated index of refraction $2.941\,204\,808\,821\,021$, $2.970\,300\,636\,020\,707$, and $2.994\,012\,005\,309\,089$. 

\section{Summary and Conclusions }\label{end-section}
In this paper, we have further studied the interior conductive transmission eigenvalue problem both analytically and numerically. The numerical experiments give that the new boundary integral equation developed here can be used to compute multiple interior transmission eigenvalue problems with a constant refractive index. Notice that theoretically the conductivity parameter can be non-constant in the proposed boundary integral equation. We have also investigated the inverse spectral problem numerically for recovering constant refractive indices from the given interior conductive transmission eigenvalue provided $\eta$ is small and unknown. Analytically we have established discreteness as well as the limiting behavior as $\eta \to \infty$ for the interior conductive transmission eigenvalue problem for a sign changing contrast. This analysis and numerical approach are both new and provide a deeper understanding of these eigenvalue problems. One unanswered question is whether the convergence rate as $\eta \to \infty$ for the eigenvalues and eigenfunctions is linear. Another one is the existence of complex eigenvalues provided the refractive index and conductivity parameters are real-valued.


\end{document}